\newcommand\C{\mathbb{C}}
\newcommand\Z{\mathbb{Z}}
\newcommand\g{\mathfrak{g}}
\newcommand\p{\mathfrak{p}}
\newcommand\h{\mathfrak{h}}
\newcommand\n{\mathfrak{n}}
\newcommand\fb{\mathfrak{b}}
\newcommand\fsl{\mathfrak{sl}}
\newcommand\fl{\mathfrak{l}}
\newcommand\fu{\mathfrak{u}}
\newcommand\fk{\mathfrak{k}}
\newcommand\D{\Delta}
\newcommand\bU{\mathbf{U}}
\DeclareMathOperator{\im}{im} 
\DeclareMathOperator{\Aut}{Aut}
\DeclareMathOperator{\Ann}{Ann}
\DeclareMathOperator{\Supp}{Supp} 
\DeclareMathOperator{\Ind}{Ind}
\theoremstyle{plain}
\newtheorem{theo}{Theorem}[section]
\newtheorem*{theo*}{Theorem}
\newtheorem{prop}[theo]{Proposition}
\newtheorem{lem}[theo]{Lemma}
\newtheorem{cor}[theo]{Corollary}
\theoremstyle{definition}
\newtheorem{defin}[theo]{Definition}
\newtheorem*{rem*}{Remark}
\newtheorem{rem}[theo]{Remark}
\newtheorem{example}[theo]{Example}
\numberwithin{equation}{section}
\newcommand{\comments}[1]{ \begin{center} \parbox{5 in}{{\bf {\footnotesize Comments:  }}{\footnotesize \textit{#1}}} \end{center}}
\newcommand{\comments}[1]{}
\newcommand{\details}[1]{\smallskip \color{blue} \begin{footnotesize} \textbf{Details:} #1 \end{footnotesize} \color{black}}
\newcommand{\details}[1]{}
\begin{document}
%

\title{Annihilators of simple integrable weight $\fsl(\infty)$-modules}
\author{Lucas Calixto}
\address{Department of Mathematics, Federal University of Minas Gerais}
\email{lhcalixto@ufmg.br}

\thanks{L. C. was supported by the Capes grant (88881.119190/2016-01) and by the PRPq grant (ADRC-05/2016).}

\subjclass[2010]{17B65, 17B10}


\keywords{annihilator of modules, weight modules, direct limit Lie algebras}

\begin{abstract}
Let $\g=\fsl(\infty)$. We compute the annihilators of a class of simple integrable weight $\g$-modules with finite-dimensional weight spaces. It is a claim of I. Dimitrov, that this class exhausts all simple integrable weight $\g$-modules with finite-dimensional weight spaces. The main feature of interest is that Dimitrov's class of modules contains non highest weight modules. Here we provide another construction for these modules, which allows to apply results of \cite{PP18} to compute such annihilators.
\end{abstract}

\maketitle \thispagestyle{empty}


%
\section{Introduction}
%

Recently the primitive ideals of the enveloping algebra of the Lie algebra $\fsl(\infty)$ has been classified by A. Petukhov and I. Penkov in \cite{PP18}. As a next step, one should like to compute the annihilators of natural classes of simple $\fsl(\infty)$-modules. The representation theory of $\fsl(\infty)$ is not as well developed as that of $\fsl(n)$. However, in the last year several classes of simple weight modules have been discussed in the literature; see \cite{Nam18, CP18, GP18}. Furthermore, in \cite{PP18}, an algorithm for computing the primitive ideals of any simple highest weight module is presented, and the problem of computing primitive ideals is open only for simple non highest weight modules. For the class of simple bounded weight modules, this problem was solved by D. Grantcharov and I. Penkov in \cite{GP18}. 

Our aim in the present note is to compute the annihilators of a class of non-bounded simple weight $\fsl(\infty)$-modules with finite-dimensional weight spaces. This class of modules has been defined by I. Dimitrov in a talk given in U.C. Berkeley \cite{D_talk}. In his talk, I. Dimitrov has claimed that these modules, up to isomorphism, are all integrable simple weight $\fsl(\infty)$-modues with finite-dimensional weight spaces. Besides simple highest weight modules, Dimitrov's class contains simple integrable modules without a highest weight. Modulo the results of \cite{PP18}, the annihilator problem reduces to the computation of the annihilators of the latter modules.
Our main result, Theorem~\ref{thm:main1}, shows that the annihilators in question equal the annihilators of certain highest weight modules, which we construct explicitly. Then our claim follows by applying the algorithm given in \cite{PP18}.

\medskip

\iftoggle{detailsnote}{
\medskip

}{}

\medskip

\paragraph{\textbf{Acknowledgements}} This paper has been written during a post-doctoral period at the Jacobs University, Bremen, under supervision of Ivan Penkov. I am grateful to Ivan Penkov for proposing the problem, for all stimulating discussions, and for valuable suggestions. I also thank the Jacobs University for the hospitality.

\medskip

\subsection*{Notation} The ground field is $\C$, and all vector spaces, algebras, and tensor products are considered to be over $\C$, unless otherwise stated. We let $\langle \ \rangle_\C$ denote the span over $\C$. We write $\C^{\Z_{>0}}$ for the set of all sequences of complex numbers. For any Lie algebra $\fk$ we let $\bU(\fk)$ denote its universal enveloping algebra.

\medskip
%
\section{Preliminaries} \label{sec:prel}
%
In what follows we set $\g=\fsl(\infty):= \varinjlim_n\fsl(n)$.   Let  $\h_n\subseteq \fsl(n)$ be the Cartan subalgebra consisting of the diagonal matrices in $\fsl(n)$, and let $\varepsilon_i\in \h_n^*$  be the functional determined by $\varepsilon_i(E_{j,j})=\delta_{i,j}$, where $E_{i,j}$ denotes the standard coordinate matrix with $1$ in the $i,j$-position and zeros elsewhere. We fix $\h = \varinjlim_n \h_n$ a Cartan subalgebra of $\g$, and we denote by $\varepsilon_i$ the vectors of $\h^*$ whose restriction to $\h_n^*$ coincide with the vectors $\varepsilon_i\in \h_n^*$. Then one can identify an element $\lambda\in \h^*$ with the formal sum $\sum \lambda_i\varepsilon_i$, or with the infinite sequence $(\lambda_1,\lambda_2, \ldots)\in \C^{\Z_{>0}}$. Let $M$ be a $\g$-module. We call $M$ an \emph{integrable module} if for all $m\in M$, $g\in \g$, we have $\dim \langle m, g\cdot m, g^2\cdot m,\ldots\rangle_\C<\infty$. We define the \emph{annihilator} of $M$ to be $\Ann M = \{u\in \bU(\g)\mid u\cdot M=0\}$. We call $M$ a \emph{weight $\g$-module} if it admits a weight space decomposition: $M=\bigoplus_{\mu\in \h^*} M_\mu$, where $M_\mu=\{m\in M\mid hm=\mu(h)m,\ \forall h\in \h^*\}$. The \emph{support} of a weight module $M$ is the set $\Supp M = \{\mu\in \h^*\mid M_\mu\neq 0\}$. All modules considered in this paper are assumed to be weight modules with finite-dimensional weight spaces, that is, $\dim M_\mu < \infty$ for every $\mu\in \Supp M$. Moreover, $M$ is called \emph{a bounded weight module} if there is a constant $c\in \Z_{>0}$ for which $\dim M_\mu < c$ for all $\mu\in \Supp M$. 

The Lie algebra $\g$ is a weight module via the adjoing action, and its weight decomposition (called root space decomposition) is given by $\g=\h\oplus \left( \bigoplus_{\alpha\in \D} \g_\alpha\right)$, where $\D=\{\varepsilon_i-\varepsilon_j\mid i,j\in \Z_{> 0}, i\neq j\}$ are the roots of $\g$.  A \emph{triangular decomposition} of $\D$ is a splitting $\D=\D^-\sqcup \D^+$ satisfying: $\alpha,\beta\in \D^+$ and $\alpha+\beta\in \D$ implies $\alpha+\beta\in \D^+$, and $\D^-=-\D^+$. Triangular decompositions of $\D$ are in bijection with linear orders on $\Z_{> 0}$:
	\[
(\Z_{> 0}, \prec)\mapsto \Delta^+=\{\varepsilon_i-\varepsilon_j\mid i\prec j,\ i,j\in \Z_{> 0}\},
	\]
and 
	\[
 \Delta^+=\{\varepsilon_i-\varepsilon_j\mid i\prec j\}\mapsto (\Z_{> 0}, \prec),\text{ where  for all }i,j\in Z_{> 0}, \  i\prec j \Leftrightarrow \varepsilon_i-\varepsilon_j\in \D^+.
	\]

Any linear order $\prec$ on $\Z_{>0}$ provides subalgebras: $\n(\prec)^\pm =\bigoplus_{i\prec j} \g_{\varepsilon_i - \varepsilon_j}$, $\fb(\prec)=\h\oplus \n(\prec)^+$. The subalgebra $\fb(\prec)$ is called the \emph{Borel subalgebra} of $\g$ associated to $\prec$. Throughout the paper we only consider Borel subalgebras of $\g$ that contains $\h$ and that can be obtained from a linear order on $\Z_{>0}$.

\begin{defin}\label{defin:Borel.subalgebras}
A Borel subalgebra $\fb(\prec)\subseteq \g$ is called a \emph{Dynkin Borel subalgebra} if and only if $(\Z_{>0},\prec)$ is isomorphic as an ordered set to $(\Z_{>0}, <)$, $(\Z_{<0}, <)$ or $(\Z, <)$. According to this a Borel subalgebra is called \emph{right-infinite}, \emph{left-infinite} and \emph{two-sided}, respectively. Dynkin Borel subalgebras are the only Borel subalgebras of $\g$ for which any positive root can be written as a finite sum of simple roots.
\end{defin}

\begin{rem}
The terminology introduced above for $\g$ also make sense for $\fsl(n)$ and it will be used freely.
\end{rem}

\section{A class of simple integrable weight modules with finite-dimensional weight spaces}\label{subsec:classification.int.mod}
The construction given in this section is due to I. Dimitrov \cite{D_talk}. It provides an explicit exhaustion for certain simple submodules of infinite tensor products that were considered previously in \cite{DP99}.

Let $\fl=\bigoplus_{k>0} \fsl(n_k)$, $n_k\in \Z_{>0}$, for all $k\in \Z_{>0}$. Choose a Borel subalgebra $\fb_{n_k}\subseteq \fsl(n_k)$, for all $k\in \Z_{>0}$, and let $\fb_{\fl}=\bigoplus_{k>0}\fb_{n_k}$. Let $L_{\fb_{n_k}}(\lambda^k)$ be a simple finite-dimensional $\fsl(n_k)$-module of highest weight $\lambda^k\in P_{\fb_{n_k}}^+$, for all $k\in \Z_{>0}$, and set $\lambda=((\lambda^1), (\lambda^2),\ldots)\in \h^*$. Consider the $\fl$-module $\bigotimes L_{\fb_\fl}(\lambda):=\bigotimes_{k>0} L_{\fb_{n_k}}(\lambda^k)$. Now, for each $k\in \Z_{>0}$, we choose a vector $v_k\in L_{\fb_{n_k}}(\lambda^k)$, and we let $\left(\bigotimes L_{\fb_\fl}(\lambda)\right)(\otimes v_k)$ denote the submodule of $\bigotimes L_{\fb_\fl}(\lambda)$ generated by the vector $\otimes_{k>0} v_k$. In other words, 
	\[
\left(\bigotimes L_{\fb_\fl}(\lambda)\right)(\otimes v_k) = \lim_{\stackrel{\longrightarrow}{\ell}} \bigotimes_{k=1}^\ell L_{\fb_{n_k}}(\lambda^k), 
	\]
where $\bigotimes_{k=1}^\ell L_{\fb_{n_k}}(\lambda^k) \hookrightarrow \bigotimes_{k=1}^{\ell+1} L_{\fb_{n_{k}}}(\lambda^k)$, $v\mapsto v\otimes v_{\ell+1}$, for all $v\in \bigotimes_{k=1}^\ell L_{\fb_{n_k}}(\lambda^k)$, and $k\in \Z_{>0}$. In particular, since $\bigotimes_{k=1}^\ell L_{\fb_{n_k}}(\lambda^k)$ is a simple integrable $\left(\bigoplus_{k=1}^\ell \fsl(n_k) \right)$-module, for every $\ell\in \Z_{>0}$, we have that $\left(\bigotimes L_{\fb_\fl}(\lambda)\right)(\otimes v_k)$ is a simple integrable $\fl$-module.

Suppose that $w_k=c_kv_k$, where $c_k\in \C$ for every $k\in \Z_{>0}$. Then
	\[
\left(\bigotimes L_{\fb_\fl}(\lambda)\right)(\otimes v_k) \cong \left(\bigotimes L_{\fb_\fl}(\lambda)\right)(\otimes w_k).
	\] 
In particular, if the weight spaces of $L_{\fb_{n_k}}(\lambda^k)$ are one-dimensional for all $k$, and, for each $k$ we choose a weight vector in $L_{\fb_{n_k}}(\lambda^k)_{\mu^k}$ with $\mu^k\in P_{\fb_{n_k}}^+$, then different choices of vectors in the same weigh spaces provide isomorphic modules. Let $L_{\fb_\fl}((\lambda), (\mu))$
denote such a module for a fixed choice of weight vectors in $L_{\fb_{n_k}}(\lambda^k)_{\mu^k}$, where $\mu = ((\mu^1), (\mu^2),\ldots)\in \h^*$ is such that $\mu^k\in \Supp L_{\fb_{n_k}}(\lambda^k)$, for all $k\in \Z_{>0}$.

Given two sequences $\gamma=(\gamma_i)\in \C^{\Z_{>0}}$ and $\eta=(\eta_i)\in \C^{\Z_{>0}}$, we write $T(\gamma)=T(\eta)$ if and only if $\gamma_i=\eta_i$ for all but finitely many indices $i\in \Z_{>0}$.

\begin{prop}
$L_{\fb_\fl}((\lambda), (\mu))\cong L_{\fb_\fl}((\lambda), (\eta))$ if and only if $T(\mu)=T(\eta)$.
\end{prop}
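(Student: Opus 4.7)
My plan is to prove the two directions separately. For the \emph{if} direction, I would assume $T(\mu) = T(\eta)$ and fix $N$ with $\mu^k = \eta^k$ for all $k > N$, and then exploit the hypothesis that each weight space $L_{\fb_{n_k}}(\lambda^k)_{\mu^k}$ is one-dimensional. This means that for $k > N$ the chosen weight vectors used to define $L_{\fb_\fl}((\lambda), (\mu))$ and $L_{\fb_\fl}((\lambda), (\eta))$ differ only by nonzero scalars, and by the rescaling observation recorded just before the proposition I may replace them by scalar multiples without changing the isomorphism type, and assume that they in fact agree for all $k > N$. It then suffices to show that the two resulting $\fl$-submodules of $\bigotimes L_{\fb_\fl}(\lambda)$ literally coincide: both are unions, over $\ell \geq N$, of subspaces of the form $\bigl(\bigotimes_{k=1}^\ell L_{\fb_{n_k}}(\lambda^k)\bigr) \otimes (v_{\ell+1} \otimes v_{\ell+2} \otimes \cdots)$ with the same tail, and since $\bigotimes_{k=1}^\ell L_{\fb_{n_k}}(\lambda^k)$ is a simple $\bigoplus_{k=1}^\ell \fsl(n_k)$-module, the action of $\bigoplus_{k=1}^\ell \fsl(n_k)$ on either generator sweeps out the entire finite tensor product. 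Taking the union over $\ell \geq N$ yields the same $\fl$-submodule in both cases.

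For the \emph{only if} direction, I plan to compare supports. The direct-limit description shows that $\Supp L_{\fb_\fl}((\lambda), (\mu))$ consists precisely of those weights $\nu$ with $\nu^k \in \Supp L_{\fb_{n_k}}(\lambda^k)$ for every $k$ and $\nu^k = \mu^k$ for all but finitely many $k$: any nonzero vector lies in some finite stage $\bigl(\bigotimes_{k=1}^\ell L_{\fb_{n_k}}(\lambda^k)\bigr) \otimes v_{\ell+1} \otimes v_{\ell+2} \otimes \cdots$ and hence has weight whose tail equals $\sum_{k > \ell} \mu^k$, while conversely every such weight is attained because each finite tensor product has full support on the product of the individual supports. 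In particular, the weight $\mu$ of the generator lies in $\Supp L_{\fb_\fl}((\lambda), (\mu))$, and if $T(\mu) \neq T(\eta)$ then $\mu \notin \Supp L_{\fb_\fl}((\lambda), (\eta))$, so any $\fl$-module isomorphism, which must preserve supports, is impossible.

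The main (mild) obstacle is giving a clean verification of the support description invoked above: careful bookkeeping with the embeddings $v \mapsto v \otimes v_{\ell+1}$ is needed to ensure that the tail contributed at each stage is precisely $\sum_{k > \ell} \mu^k$ and that no further weights arise upon passing to the direct limit.
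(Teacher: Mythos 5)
Your proof is correct. The \emph{if} direction is in substance the paper's own argument: after using the rescaling remark to arrange that the chosen weight vectors agree in all blocks $k>N$, the two submodules of $\bigotimes L_{\fb_\fl}(\lambda)$ share a nonzero vector and are each exhausted by the same finite-stage subspaces $\bigl(\bigotimes_{k=1}^\ell L_{\fb_{n_k}}(\lambda^k)\bigr)\otimes(v_{\ell+1}\otimes\cdots)$ for $\ell\geq N$; the paper phrases this more briefly as ``a common vector plus simplicity,'' while you make the union explicit. Where you go beyond the paper is the \emph{only if} direction: the paper's proof begins ``Since $T(\mu)=T(\eta)$\dots'' and never addresses the converse, whereas your support computation --- $\Supp L_{\fb_\fl}((\lambda),(\eta))$ consists exactly of the weights $\nu$ with $\nu^k\in\Supp L_{\fb_{n_k}}(\lambda^k)$ for all $k$ and $\nu^k=\eta^k$ for almost all $k$, so $\mu$ lies in it only if $T(\mu)=T(\eta)$ --- supplies the missing half. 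One small point worth a sentence in a final write-up: the isomorphism is one of $\fl$-modules, so it preserves weights only for the Cartan subalgebra $\bigoplus_k\h_{n_k}$ of $\fl$ rather than all of $\h$; this is harmless because distinct weights of $\Supp L_{\fb_{n_k}}(\lambda^k)$ already restrict to distinct functionals on $\h_{n_k}$, but it should be said.
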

\begin{proof}
Since $T(\mu)=T(\eta)$, we have that the vector $\otimes v_k\in \bigotimes L_{\fb_{n_k}}(\lambda)_{\mu^k}$ lies in both modules $L_{\fb_\fl}((\lambda), (\mu))$ and $L_{\fb_\fl}((\lambda), (\eta))$. Since both modules are simple, the result follows.
\end{proof}

For a Borel subalgebra $\fb:=\fb(\prec)$ of $\g$ we define $P_\fb^+=\{\lambda:=(\lambda_i)\in \C^{\Z_{>0}} \mid \lambda_i - \lambda_j\in \Z_{\geq 0}, \text{ for all }i\prec j\}$ the set of dominant integral weights of $\g$ with respect to $\fb$. For any $\lambda\in \h^*$ we let $L_\fb(\lambda)$ denote the irreducible $\fb$-highest weight module with highest weight $\lambda$. From now on assume that $\fb$ is a Dynkin Borel subalgebra of $\g$ (see Definition~\ref{defin:Borel.subalgebras}), and we set $\fb_{n_k}:=\fb\cap \fsl(n_k)$ for all $k\in \Z_{>0}$. Let $\lambda\in P_\fb^+$ and suppose that  $\lambda=((\lambda^1), (\lambda^2), \ldots)\in \h^*$, with $\lambda^k=(\lambda^k_1,\ldots, \lambda^k_{n_k})\in P_{\fb_{n_k}}^+$, for all $k\in \Z_{>0}$. Assume also that each simple $\fsl(n_k)$-module $L_{\fb_{n_k}}(\lambda^k)$ has one-dimensional weight spaces. For every $k\in \Z_{> 0}$, we choose $\mu^k\in \Supp L_{\fb_{n_k}}(\lambda^k)$, and we let $\mu=((\mu^1), (\mu^2),\ldots)\in \h^*$. Under these assumptions, we can consider the $\fl$-module $L_{\fb_\fl}((\lambda), (\mu))$ defined above, where $\fl:=\bigoplus_{k\in \Z_{>0}} \fsl(n_k)$. Finally, we set $N_k=\sum_{\ell=1}^k n_\ell$, and $\lambda^{(k)}=((\lambda^1),\ldots, (\lambda^k))\in P_{\fb_{N_k}}$, where $\fb_{N_k}:=\fb\cap \fsl(N_k)$, and we let $\p=\fl\oplus \fu$ be a parabolic subalgebra of $\g$ containing $\fb$.  Consider $L_{\fb_\fl}((\lambda), (\mu))$ as $\p$-module with trivial action of $\fu$. 

\begin{lem}
With the above notation the induced module
	\[
\Ind^\g_\p L_{\fb_\fl}((\lambda), (\mu)):= \bU(\g)\otimes_{\bU(\p)} L_{\fb_\fl}((\lambda), (\mu))
	\]
has a unique maximal proper submodule.
\end{lem}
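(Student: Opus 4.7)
Plan: Set $V \defeq L_{\fb_\fl}((\lambda),(\mu))$ and $M \defeq \Ind^\g_\p V$. I propose to adapt the classical parabolic induction argument. By PBW applied to $\g = \fu^- \oplus \fl \oplus \fu$, there is a vector-space isomorphism $M \cong \bU(\fu^-) \otimes V$ under which $V$ corresponds to $1 \otimes V$. Since $V$ is simple as an $\fl$-module and $\fu$ acts trivially on it, $V$ is a simple $\p$-submodule of $M$.

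The heart of the proof is the identification $M_\nu = 1 \otimes V_\nu$ for every $\nu \in \Supp V$: the weights of $V$ appear only at the ``top'' of $M$. Granting this, the conclusion is standard. For any $\g$-submodule $N \subseteq M$, the intersection $N \cap V$ is a $\p$-submodule of the simple $\p$-module $V$, so $N \cap V \in \{0, V\}$; in the latter case $V \subseteq N$ forces $N = \bU(\g) \cdot V = M$. Hence $N$ is proper precisely when $N_\nu = 0$ for all $\nu \in \Supp V$. Because this property is stable under arbitrary sums of submodules (weight by weight, using the decomposition $M = \bigoplus_\mu M_\mu$), the sum of all proper submodules is itself proper, and this sum is the unique maximal proper submodule.

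To prove $M_\nu = 1 \otimes V_\nu$, I would use that $\Supp V - \Supp V \subseteq \Span_\Z \D(\fl)$ (since $V$ is $\fl$-simple, hence generated as $\fl$-module by any weight vector), while the weights of $\bU(\fu^-)$ lie in the cone $\cC \defeq \sum_{\alpha \in \D(\fu)} \Z_{\leq 0}\,\alpha$. Therefore the identification reduces to the sublemma
\[
\Span_\Z \D(\fl) \cap \cC = \{0\}.
\]
To prove the sublemma, observe first that the inclusion $\fb \subseteq \p$ forces the blocks $I_k$ (of sizes $n_k$) underlying $\fl = \bigoplus_k \fsl(n_k)$ to be intervals in the order $\prec$ associated to $\fb$: otherwise two roots of $\fu$ would bracket to a root of $\fl$, contradicting $\fl \cap \fu = 0$. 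Consequently the blocks are themselves linearly ordered by $\prec$, and since $\fb$ is Dynkin this block order embeds into $\Z$. After fixing such an embedding, I define a $\Z$-linear height $h$ on the root lattice of $\g$ by setting $h(\varepsilon_i)$ equal to the integer assigned to the block containing $i$. Then $h$ vanishes on $\D(\fl)$ and is strictly positive on every $\alpha \in \D(\fu)$, so $h \leq 0$ on $\cC$ with vanishing only at $0$, giving the sublemma.

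The main obstacle is the sublemma: in finite dimensions the separating $\Z$-grading is obvious, whereas in the $\fsl(\infty)$ setting one genuinely uses the Dynkin hypothesis on $\fb$ to enumerate the blocks of $\fl$ by integers in a $\prec$-compatible fashion. Beyond this, the proof is a clean transcription of the classical parabolic induction argument.
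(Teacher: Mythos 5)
Your proposal is correct and follows essentially the same route as the paper: identify $M_\nu = 1\otimes V_\nu$ for $\nu\in\Supp V$ via PBW, deduce that every proper submodule meets $V$ trivially and hence lives in the complementary sum of weight spaces, and conclude that the sum of all proper submodules is proper. The paper simply asserts the decomposition $M = V \oplus \bigl(\bigoplus_{\nu\notin\Supp V} M_\nu\bigr)$ ``by PBW,'' whereas you supply the separating height function justifying it (modulo an immaterial sign: with an order-preserving enumeration of the blocks your $h$ is strictly \emph{negative} on $\D(\fu)$, which serves equally well).
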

\begin{proof}
Let $L=L_{\fb_\fl}((\lambda), (\mu))$ and $M=\Ind^\g_\p L_{\fb_\fl}((\lambda), (\mu))$. By PBW Theorem, $\Ind^\g_\p L$ admits a vector space decomposition $L\oplus N$, where $N=\bigoplus_{\mu\notin \Supp L} M_\mu$. Let $N'$ be any proper submodule of $M$. Since $L$ is simple we must have that $N'\cap L=0$ and hence $N'\subseteq N$. In particular, the sum of all proper submodules of $M$ is contained in $N$, and the result is proved.
\end{proof}

Let $V_\p(L_{\fb_\fl}((\lambda), (\mu)))$ denote the unique simple quotient of $\Ind^\g_\p L_{\fb_\fl}((\lambda), (\mu))$.

\begin{prop}
$V_\p(L_{\fb_\fl}((\lambda), (\mu)))$ is integrable with finite-dimensional weight spaces.
\end{prop}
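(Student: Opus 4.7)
The plan is to realize $V := V_\p(L_{\fb_\fl}((\lambda),(\mu)))$ as an ascending union of finite-dimensional simple $\fsl(N_k)$-modules. Write $L := L_{\fb_\fl}((\lambda),(\mu))$ and identify $L$ with its image in $V$, which is isomorphic to $L$ by the preceding lemma. For each $k \geq 1$, define $V^{(k)}$ as the $\bU(\fsl(N_k))$-submodule of $V$ generated by $L^{(k)} := \bigotimes_{\ell \leq k} L_{\fb_{n_\ell}}(\lambda^\ell) \subseteq L$. Since $L = \bigcup_k L^{(k)}$ and $\bU(\g) = \bigcup_k \bU(\fsl(N_k))$, one has $V^{(k)} \subseteq V^{(k+1)}$ and $V = \bigcup_k V^{(k)}$.

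The central claim is that each $V^{(k)}$ is isomorphic, as an $\fsl(N_k)$-module, to the finite-dimensional simple module $L_{\fsl(N_k)}(\lambda^{(k)})$. Since $L^{(k)}$ is annihilated by $\fu \cap \fsl(N_k)$ (inherited from the $\p$-module structure of $L$) and $\lambda^{(k)} \in P_{\fb_{N_k}}^+$ is dominant integral, the module $V^{(k)}$ is a quotient of the generalized Verma module $\Ind^{\fsl(N_k)}_{\p \cap \fsl(N_k)} L^{(k)}$, whose unique simple quotient is the finite-dimensional $L_{\fsl(N_k)}(\lambda^{(k)})$. To show that $V^{(k)}$ equals this simple quotient, it suffices to verify that the maximal proper submodule $M^{(k)}$ of this generalized Verma module maps to $0$ in $V$. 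Equivalently, the $\bU(\g)$-submodule $\tilde{M}^{(k)} := \bU(\g) \cdot M^{(k)}$ of $\Ind^\g_\p L$ must be proper, hence contained in the unique maximal proper submodule from the preceding lemma. Since $1 \otimes L$ is simple as a $\p$-module, the intersection $\tilde{M}^{(k)} \cap (1 \otimes L)$ is either $0$ or all of $1 \otimes L$; the latter would force $\tilde{M}^{(k)} = \Ind^\g_\p L$. The generators of $M^{(k)}$, namely $F_\alpha^{\langle \lambda^{(k)},\alpha^\vee\rangle + 1} \otimes v_{\lambda^{(k)}}$ for the simple roots $\alpha$ of $\fsl(N_k)$ not lying in $\fl$, all have strictly positive $\bU(\fu^-)$-degree; a careful PBW analysis --- tracking that only the $\fu$-action can reduce this degree (via commutators with $\fu^-$-factors), together with the fact that $\fu$ annihilates $1 \otimes L$ --- rules out degree-$0$ vectors in $\tilde{M}^{(k)}$. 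This PBW-degree argument is the main technical obstacle.

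Granted the central claim, integrability of $V$ is immediate: any $v \in V$ lies in some $V^{(k)}$, and for $x \in \g$ with $x \in \fsl(N_{k'})$ we have $\langle x^n v : n \geq 0\rangle \subseteq V^{(\max(k,k'))}$, which is finite-dimensional. For finite-dimensionality of weight spaces, I would argue directly at the level of $\Ind^\g_\p L$: any $\nu \in \Supp V$ satisfies $\nu_i = \mu_i$ for all but finitely many $i$, say for $i > M_\nu$. In the decomposition $(\Ind^\g_\p L)_\nu = \bigoplus_\beta \bU(\fu^-)_{-\beta} \otimes L_{\nu + \beta}$ with $\beta \in \N\Delta(\fu)$, the condition $\nu + \beta \in \Supp L$ forces $\beta_i = 0$ for $i > M_\nu$; using that every $\fu$-root strictly crosses from an earlier to a later $\fl$-block, this combined with the requirement $(\nu + \beta)^{(K)} \in \Supp L^{(K)}$ for some $K$ with $N_K \geq M_\nu$ (a finite set) confines $\beta$ to a finite subset of $\N\Delta(\fu)$. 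Each contributing summand is finite-dimensional, so $(\Ind^\g_\p L)_\nu$, and hence $V_\nu$, is finite-dimensional.
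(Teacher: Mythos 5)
Your treatment of the finite-dimensionality of weight spaces is essentially the paper's own argument: both work inside $\Ind^\g_\p L\cong \bU(\fu^-)\otimes L$, use that $\dim\bU(\fu^-)_{-\beta}<\infty$ because $\fb$ is Dynkin, that the weight spaces of $L$ are one-dimensional, and that only finitely many $\beta\in\N\Delta(\fu)$ can contribute to a fixed weight $\nu$. That half is fine (your bookkeeping with the blocks is at the same level of detail as the paper's ``it is not hard to see'').

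The integrability half, however, has a genuine gap. Your route is to first prove that $V=\bigcup_k V^{(k)}$ with $V^{(k)}=\bU(\fsl(N_k))\cdot L^{(k)}\cong L_{\fb_{N_k}}(\lambda^{(k)})$ finite dimensional, and to read off integrability from that. But this ``central claim'' is precisely the exhaustion of Theorem~\ref{thm:main1}, which the paper derives \emph{from} the integrability established in the present proposition (an integrable $\fsl(N_k)$-module generated by a $\fb_{N_k}$-highest weight vector is finite dimensional with $W_{N_k}$-invariant support, hence simple). To avoid circularity you must show, without using integrability, that the maximal submodule $M^{(k)}$ of the generalized Verma module dies in $V$, i.e.\ that $\bU(\g)\cdot M^{(k)}$ is a proper submodule of $\Ind^\g_\p L$. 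Your PBW-degree heuristic does not establish this: the singular vectors $F_\alpha^{\langle\lambda^{(k)},\alpha^\vee\rangle+1}\otimes v_{\lambda^{(k)}}$ are indeed annihilated by $\n^+\cap\fsl(N_k)$, but elements of $\bU(\fu)$ supported on indices beyond $N_k$ do lower the $\bU(\fu^-)$-degree, and weight considerations alone do not force the resulting degree-zero components to vanish (a $\bU(\fu)_\gamma$-contribution survives in $1\otimes L$ whenever $\gamma\equiv m_\alpha\alpha \bmod \Z\Delta(\fl)$, which certainly occurs). You explicitly flag this verification as ``the main technical obstacle'' and do not carry it out; note that carrying it out amounts to showing that each $F_\alpha$ acts nilpotently on the image of $v_{\lambda^{(k)}}$ in $V$ --- essentially the integrability statement itself. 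The paper's proof avoids all of this by arguing directly on $V$: any $v\in V$ is the image of finitely many $u_i\otimes v_i$ with $u_i\in\bU(\fsl(n))$, and the integrality and dominance of $\lambda$ yield $g^{m_i}\cdot\pi(u_i\otimes v_i)=0$; the exhaustion by the $L_{\fb_{N_k}}(\lambda^{(k)})$ is then obtained afterwards, as Theorem~\ref{thm:main1}, with integrability already in hand. You should either supply the missing PBW analysis in full or reverse the logical order as the paper does.
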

\begin{proof}
Let $L=L_{\fb_\fl}((\lambda), (\mu))$, $M=\Ind^\g_\p L_{\fb_\fl}((\lambda), (\mu))$, $V=V_\p(L_{\fb_\fl}((\lambda), (\mu)))$, and $\pi:M\to V$ the  projection defining $V$. Let $\fu^-$ be the opposite subalgebra of $\fu$. Then $\g=\fu^-\oplus \p$, and by PBW Theorem, $M\cong \bU(\fu^-)\otimes L$. Hence, for every $\nu\in \Supp M$ we have 
	\[
M_\nu\cong \bigoplus_{\lambda+\lambda'=\nu}\bU(\fu^-)_{\lambda}\otimes L_{\lambda'}.
	\] 
Since every block of $\fl$ is finite, it is not hard to see that for any $\nu\in \Supp M$, there are finitely many pairs $(\lambda, \lambda')$ for which $\lambda+\lambda'=\nu$. Furthermore, since $\p$ contains a Dynkin Borel subalgebra $\fb=\h\oplus \n$, it follows that $\fu^-\subseteq \n^-$, where $\n^-$ denotes the opposite subalgebra of $\n$. In particular, since $\fb$ is Dynkin, we have that $\dim \bU(\fu^-)_\beta < \infty$ for all $\beta\in \Supp \bU(\fu^-)$. Now, the claim that $\dim M_\nu<\infty$ follows from the fact that $\dim L_{\mu}=1$ for all $\mu\in \Supp L$. To see that $V$ is integrable we notice that for any $g\in \g$ and $v\in V$, there is $n\gg 0$ such that $g\in \fsl(n)$, and 
	\[
v=\pi(\sum_{i=1}^r u_i\otimes v_i)\in \pi\left(\bU(\fsl(n))\otimes_{\p\cap \fsl(n)} \bigotimes_{k=1}^n L_{\fb_{n_k}}(\lambda^k)\right).
	\]
Since $\lambda\in P_\fb^+$, for each $i=1,\ldots, r$, there is $m_i\in\Z_{>0}$ such that $g^{m_i}\cdot \pi(u_i\otimes v_i)=0$. In particular, this shows that $g^m v=0$ for $m\gg 0$. Thus the result follows.
\end{proof}

Consider an infinite subset $A=\{a_1,a_2,\ldots\mid a_i < a_{i+1}\}\subseteq \Z_{>0}$. For any $a_n\in \Z_{>0}$ we have a unique, up to a scalar multiplication, embedding of $\fsl(n)$-modules $\Lambda^{a_n}V_{n}\hookrightarrow \Lambda^{a_{n+1}}V_{n+1}$. We define the $\g$-module $S_A^{\infty} V$ to be the direct limit $\varinjlim_{n}S^{a_n}V_{n}$ (see \cite{GP18} for details). The following claim due to I. Dimitrov \cite{D_talk}: \\ 

\emph{
Any integrable weight module with finite-dimensional weight spaces is isomorphic to either $V_\p(L_{\fb_\fl}((\lambda), (\mu)))$ for some Dynkin Borel subalgebra $\fb\subseteq \g$, some parabolic subalgebra $\p\supseteq \fb$, and $\lambda,\mu \in \h^*$, or to $S_A^\infty V$ for some infinite subset $A=\{a_1, a_2,\ldots\mid a_i< a_{i+1}\}\subseteq \Z_{> 0}$.
}

\section{The annihilator of simple integrable weight modules with finite-dimensional weight spaces}

This section is devoted to compute the annihilators of simple integrable $\g$-modules with finite-dimensional weight spaces. In what follows we let $W_{n}$ denote the Weyl group of $\fsl(n)$ for every $n\in\Z_{>0}$.

\begin{theo}\label{thm:main1}
There exists an exhaustion of $V_\p(L_{\fb_\fl}((\lambda), (\mu)))$ given by the simple $\fsl(N_k)$-modules $L_{\fb_{N_k}}(\lambda^{(k)})$. In particular,  $\Ann V_\p(L_{\fb_\fl}((\lambda), (\mu))) = \Ann L_\fb (\lambda)$.
\end{theo}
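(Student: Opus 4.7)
The plan is to construct an explicit increasing chain $W_1 \subseteq W_2 \subseteq \cdots$ of $\fsl(N_k)$-submodules of $V:=V_\p(L_{\fb_\fl}((\lambda),(\mu)))$ with $W_k \cong L_{\fb_{N_k}}(\lambda^{(k)})$ and $V = \bigcup_k W_k$, and then to read off the annihilator equality by comparing with the standard exhaustion of $L_\fb(\lambda)$. Let $v_\ell^{\mathrm{hw}}\in L_{\fb_{n_\ell}}(\lambda^\ell)$ denote the $\fb_{n_\ell}$-highest weight vector, and let $\tilde v_k\in L:=L_{\fb_\fl}((\lambda),(\mu))$ be the image in the direct limit of $v_1^{\mathrm{hw}}\otimes\cdots\otimes v_k^{\mathrm{hw}}$. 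Since $L$ injects into $V$ (as follows from the previous lemma, whose maximal proper submodule meets $L$ trivially), $\tilde v_k$ has nonzero image in $V$.

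First I would show that $\tilde v_k$, viewed in $V$, is an $\fb_{N_k}$-highest weight vector of weight $\lambda^{(k)}$. The splitting $\fb_{N_k}^+=(\fb_{N_k}^+\cap \fl)\oplus(\fb_{N_k}^+\cap\fu)$ is the key observation: the first summand equals $\bigoplus_{\ell\leq k}\fb_{n_\ell}^+$ and annihilates $\tilde v_k$ tensor-factor by tensor-factor, while the second summand lies in $\fu$ and so acts trivially on $L\subseteq \Ind^\g_\p L$. Setting $W_k := \bU(\fsl(N_k))\cdot\tilde v_k$, this realizes $W_k$ as a highest weight $\fsl(N_k)$-module of integral dominant weight $\lambda^{(k)}\in P_{\fb_{N_k}}^+$. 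Since $V$ is integrable, so is $W_k$; but an integrable highest weight module over a finite-dimensional simple Lie algebra with integral dominant highest weight is irreducible, so $W_k\cong L_{\fb_{N_k}}(\lambda^{(k)})$.

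For the nesting $W_k\subseteq W_{k+1}$, observe that $\tilde v_k$ and $\tilde v_{k+1}$ agree outside the $(k{+}1)$-st tensor position, where they carry $v_{k+1}$ and $v_{k+1}^{\mathrm{hw}}$ respectively. Since $L_{\fb_{n_{k+1}}}(\lambda^{k+1})$ is simple, there is $u'\in\bU(\fsl(n_{k+1}))\subseteq\bU(\fsl(N_{k+1}))$ with $u'\cdot v_{k+1}^{\mathrm{hw}}=v_{k+1}$, whence $\tilde v_k=u'\cdot\tilde v_{k+1}\in W_{k+1}$ and therefore $W_k\subseteq W_{k+1}$. Exhaustion $V=\bigcup_k W_k$ follows from the PBW isomorphism $\Ind^\g_\p L\cong\bU(\fu^-)\otimes L$: the subspace $L^{(k)}:=\bigotimes_{\ell\leq k}L_{\fb_{n_\ell}}(\lambda^\ell)$ of $L$ is generated by $\tilde v_k$ over $\bigoplus_{\ell\leq k}\fsl(n_\ell)\subseteq\fsl(N_k)$, so $L^{(k)}\subseteq W_k$; and any element of $\bU(\fu^-)$ lies in $\bU(\fsl(n))\subseteq \bU(\fsl(N_k))$ for $k$ large enough.

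The annihilator equality is then formal: the simple highest weight $\g$-module $L_\fb(\lambda)$ carries its own standard exhaustion by the simple $\fsl(N_k)$-submodules $L_{\fb_{N_k}}(\lambda^{(k)})$, so for any $u\in\bU(\g)$ lying in some $\bU(\fsl(n))$ and any $k$ with $N_k\geq n$, the operator induced by $u$ on $W_k\cong L_{\fb_{N_k}}(\lambda^{(k)})$ and on the $k$-th layer of $L_\fb(\lambda)$ is the same map. Hence $u$ annihilates $V$ iff it annihilates $L_\fb(\lambda)$. The main obstacle I foresee is the highest-weight verification in the second paragraph: one must carefully identify which positive root vectors of $\fsl(N_k)$ lie in $\fl$ versus in $\fu$, so that the combination of the induced-module structure and the factor-wise highest-weight property kills them all.
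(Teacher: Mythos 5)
Your proposal is correct and follows essentially the same route as the paper: the vector $\tilde v_k$ is exactly the paper's $v^k=(\otimes_{i\le k}v_{\lambda^i})(\otimes_{\ell>k}v_{\mu^\ell})$, the submodules $W_k=\bU(\fsl(N_k))\cdot v^k\cong L_{\fb_{N_k}}(\lambda^{(k)})$ are the same exhaustion, and the annihilator equality is obtained in both cases by comparing with the standard exhaustion of $L_\fb(\lambda)$ by the same simple $\fsl(N_k)$-modules and using $\Ann(\varinjlim M_k)=\bigcap_k\Ann M_k$. You supply somewhat more detail than the paper at two points (the $\fb_{N_k}^+=(\fb_{N_k}^+\cap\fl)\oplus(\fb_{N_k}^+\cap\fu)$ verification of the highest-weight property, and the PBW argument for $V=\bigcup_k W_k$ in place of invoking simplicity of $V$), but these are refinements of the same argument, not a different one.
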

\begin{proof}
Notice that $v^k=(\otimes_{i=1}^k v_{\lambda^i})(\otimes_{\ell> k} v_{\mu^\ell})$ is a nonzero $\fb_{N_k}$-highest weight vector of $\h_{N_k}$-weight $\lambda^{(k)}$. Consider the $\fsl(N_k)$-submodule of $V_\p(L_{\fb_\fl}((\lambda), (\mu)))$ generated by $v^k$. Since $V_\p(L_{\fb_\fl}((\lambda), (\mu)))$ is integrable, it follows that $\dim \bU(\fsl(N_k))\cdot v^k < \infty$. Thus $\Supp \bU(\fsl(N_k))\cdot v^k$ is $W_{N_k}$-invariant, and hence $\bU(\fsl(N_k))\cdot v^k\cong L_{\fb_{N_k}}(\lambda^{(k)})$. Notice now that the assignment 
	\[
\varphi_{\mu^k}: L_{\fb_{N_k}}(\lambda^{(k)})\to L_{\fb_{N_{k+1}}}(\lambda^{(k+1)}),\quad (\otimes_{i=1}^k v_{\lambda^i})\mapsto (\otimes_{i=1}^k v_{\lambda^i})\otimes v_{\mu^{k+1}}
	\]
gives a well defined embedding of $\fsl(N_k)$-modules. Since $V_\p(L_{\fb_\fl}((\lambda), (\mu)))$ is simple, the family of maps $\{\varphi_{\mu^k}\}_{k\in \Z_{>0}}$ gives the desired exhaustion. Finally, the modules $L_{\fb_{N_k}}(\lambda^{(k)})$ also give an exhaustion for $L_{\fb}(\lambda)$ (via a different family of embeddings). Finally, the result follows from the fact that if $M$ is a $\varinjlim_n\g(n)=\g$-module such that $M=\varinjlim_n M_n$, where each $M_n$ is a $\g(n)$-module, then $\Ann M = \cap \Ann M_n$.
\end{proof}

\begin{cor}
Let $\im (\lambda)=\{\lambda_i\mid i\in \Z_{>0}\}$. Then $\Ann V_\p(L_{\fb_\fl}((\lambda), (\mu))) \neq 0$ if and only if $|\im (\lambda)|<\infty$.
\end{cor}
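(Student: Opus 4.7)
The plan is first to use Theorem~\ref{thm:main1} to reduce the corollary to a statement about a simple highest weight module, and then to apply the classification of primitive ideals of $\bU(\g)$ from \cite{PP18}.

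Precisely, by Theorem~\ref{thm:main1} we have $\Ann V_\p(L_{\fb_\fl}((\lambda),(\mu)))=\Ann L_\fb(\lambda)$, so it will suffice to prove that for a Dynkin Borel $\fb$ and $\lambda\in P_\fb^+$, $\Ann L_\fb(\lambda)\neq 0$ if and only if $|\im(\lambda)|<\infty$.

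For the ``if'' direction, I would argue as follows. Since $\fb$ is Dynkin and $\lambda\in P_\fb^+$, consecutive entries of $\lambda$ differ by nonnegative integers along $\prec$. If $|\im(\lambda)|<\infty$, this forces $\lambda$ to be eventually constant in the order $\prec$; shifting by the eventual value I may assume $\lambda$ has finite support. Then $L_\fb(\lambda)$ is a polynomial highest weight module, for which the algorithm in \cite{PP18} exhibits an explicit nonzero element of the annihilator.

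For the ``only if'' direction, I would pick a nonzero $u\in\bU(\fsl(N))\cap\Ann L_\fb(\lambda)$ and use the exhaustion from Theorem~\ref{thm:main1} to conclude that $u$ annihilates $L_{\fb_{N_k}}(\lambda^{(k)})$ for every sufficiently large $k$, and hence, by complete reducibility, every simple $\fsl(N)$-constituent of its $\fsl(N)$-restriction. If $|\im(\lambda)|=\infty$, then the tail $(\lambda_i)_{i>N}$ takes infinitely many distinct values and, by Pieri-type branching rules, these constituents exhaust a sufficiently generic family of dominant integral $\fsl(N)$-highest weights. Since $\bU(\fsl(N))$ acts faithfully on the direct sum of all finite-dimensional simple modules, this would force $u=0$, contradicting the assumption.

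The main obstacle is the ``only if'' direction: making ``sufficiently generic family'' into a statement that really forces $u=0$ requires a careful branching analysis, which is exactly where the description of primitive ideals in \cite{PP18} is needed. Once the reduction to finite-support $\lambda$ is in place, the ``if'' direction is essentially bookkeeping with already computed formulas.
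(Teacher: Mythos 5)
Your opening reduction, via Theorem~\ref{thm:main1}, to the claim that $\Ann L_\fb(\lambda)\neq 0$ if and only if $|\im(\lambda)|<\infty$ is exactly the paper's first step. But that is also the paper's \emph{last} step: the criterion for a simple highest weight module over $\fsl(\infty)$ with Dynkin Borel subalgebra to have nonzero annihilator is simply quoted from \cite[Theorem~9]{PP16}. You instead undertake to reprove that criterion, and this is where the genuine gap sits.

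In your ``only if'' direction, the assertion that the $\fsl(N)$-constituents of the modules $L_{\fb_{N_k}}(\lambda^{(k)})$ form a ``sufficiently generic'' family whose annihilators intersect to zero in $\bU(\fsl(N))$ is not a detail to be filled in: it \emph{is} the cited theorem, and establishing it is the content of the coherent-local-system/growth analysis of \cite{PP16}. Mere infinitude of the family of constituents is not enough: for $\fsl(N)$ with $N\geq 3$, the infinite family $\{L(n\omega_1)\}_{n\geq 0}$ satisfies $\bigcap_n \Ann L(n\omega_1)\neq 0$, because the action of $\bU(\fsl(N))$ on $\bigoplus_n S^n(\C^N)\cong\C[x_1,\dots,x_N]$ factors through the Weyl algebra, which has smaller GK-dimension. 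So one must genuinely control \emph{which} constituents appear, which your Pieri-rule sketch does not do. Your ``if'' direction also has a concrete error: for a two-sided Dynkin Borel subalgebra, $|\im(\lambda)|<\infty$ does not reduce to finite support after a shift. The weight $\lambda=\varepsilon_A$ of Corollary~\ref{cor:iff.bound}\eqref{item3:cor:iff.bound} takes only the two values $0$ and $1$, each on an infinite set, so no constant shift makes it finitely supported and $L_\fb(\lambda)$ is not a polynomial module; yet its annihilator is nonzero. Both directions are repaired at once by doing what the paper does: after the reduction given by Theorem~\ref{thm:main1}, cite \cite[Theorem~9]{PP16} (or the description of $\Ann L_\fb(\lambda)$ in \cite[\S~6]{PP18}) rather than rederive it.
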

\begin{proof}
This follows from Theorem~\ref{thm:main1} and \cite[Theorem~9]{PP16}.
\end{proof}

\begin{cor}\label{cor:iff.hw}
$V_\p(L_{\fb_\fl}((\lambda), (\mu)))$ is a highest weight module with respect to some Borel subalgebra if and only if there is $k_0\in\Z_{>0}$, $\nu^{(k_0)}\in W_{N_{k_0}}\cdot \lambda^{(k_0)}$ such that $(\nu^{(k_0)}, \mu^{k_0+1},\ldots, \mu^{k_0+n})\in W_{N_{k+n}}\cdot \lambda^{(k_0+n)}$ for any $n>0$.
\end{cor}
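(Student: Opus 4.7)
My approach is to use the exhaustion from Theorem~\ref{thm:main1} to reduce the question of being a highest weight module to a compatible family of extremal weight conditions on the finite-dimensional pieces $L_{\fb_{N_k}}(\lambda^{(k)})$, and then to glue the resulting Borel subalgebras by a compactness argument. The central fact I will exploit is that a weight $\beta$ of a simple finite-dimensional $\fsl(N)$-module $L(\alpha)$ is the highest weight with respect to some Borel of $\fsl(N)$ if and only if $\beta\in W_N\cdot\alpha$.

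For the forward direction, I would start with a Borel $\fb'=\fb(\prec)$ of $\g$ and a highest weight vector $v\in V:=V_\p(L_{\fb_\fl}((\lambda),(\mu)))$ of weight $\nu$, and locate $v$ inside the exhaustion. By Theorem~\ref{thm:main1}, $v\in L_{\fb_{N_k}}(\lambda^{(k)})$ for some $k$; take $k_0$ minimal and let $\nu^{(k_0)}$ denote the $\fsl(N_{k_0})$-weight of $v$ there, so that $\nu=(\nu^{(k_0)},\mu^{k_0+1},\mu^{k_0+2},\ldots)$. For each $n\ge 0$, the subalgebra $\fb'\cap\fsl(N_{k_0+n})$ is a Borel of $\fsl(N_{k_0+n})$ whose positive part annihilates $v$; simplicity of $L_{\fb_{N_{k_0+n}}}(\lambda^{(k_0+n)})$ makes $v$ a highest weight vector of that piece, so its weight $(\nu^{(k_0)},\mu^{k_0+1},\ldots,\mu^{k_0+n})$ lies in $W_{N_{k_0+n}}\cdot\lambda^{(k_0+n)}$ by the fact above.

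For the converse, my plan is to build a Borel of $\g$ that turns a chosen extremal-weight vector into a highest weight vector. For each $n\ge 0$ let $B_n$ be the (finite, non-empty by hypothesis) set of linear orders on $\{1,\ldots,N_{k_0+n}\}$ for which $(\nu^{(k_0)},\mu^{k_0+1},\ldots,\mu^{k_0+n})$ is a highest weight of $L_{\fb_{N_{k_0+n}}}(\lambda^{(k_0+n)})$. Restriction of linear orders induces maps $B_{n+1}\to B_n$, since a Borel annihilating the relevant extremal vector in the larger $\fsl$ still does so after intersecting with the smaller one. K\"onig's Lemma applied to this inverse system of finite non-empty sets yields a coherent family, hence a linear order $\prec$ on $\Z_{>0}$ and a Borel $\fb(\prec)\subseteq\g$. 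Take a nonzero weight vector $v\in V$ of weight $\nu:=(\nu^{(k_0)},\mu^{k_0+1},\mu^{k_0+2},\ldots)$; such $v$ exists because $\nu^{(k_0)}\in W_{N_{k_0}}\cdot\lambda^{(k_0)}$ is an extremal weight of $L_{\fb_{N_{k_0}}}(\lambda^{(k_0)})\subseteq V$. Any positive root of $\fb(\prec)$ sits in some $\fsl(N_{k_0+n})$ and kills $v$ at that level, so $v$ is $\fb(\prec)$-singular; simplicity of $V$ forces $v$ to generate $V$, exhibiting it as a highest weight module with highest weight $\nu$.

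The main obstacle I anticipate is the gluing step in the converse: each $B_n$ may have several elements, and there is no canonical way to choose compatibly by hand, so a naive inductive construction could fail. The compactness argument via the inverse system of finite non-empty sets is what makes this clean; everything else reduces to the standard dictionary between Borels of $\fsl(N)$, linear orders on $\{1,\ldots,N\}$, and extremal weights of finite-dimensional simple modules, together with the exhaustion already established in Theorem~\ref{thm:main1}.
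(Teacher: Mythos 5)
Your argument is correct and follows essentially the same route as the paper: the exhaustion of Theorem~\ref{thm:main1}, the identification of extremal weights of $L_{\fb_{N_k}}(\lambda^{(k)})$ with the orbit $W_{N_k}\cdot\lambda^{(k)}$, and the tracking of weights through the embeddings $\varphi_{\mu^k}$. The only addition is that you make explicit, via the inverse limit of the finite non-empty sets $B_n$, the gluing of the finite-level Borel subalgebras into a linear order on $\Z_{>0}$ -- a step the paper's proof leaves implicit.
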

\begin{proof}
To see this we first recall from Theorem~\ref{thm:main1} that $V_\p(L_{\fb_\fl}((\lambda), (\mu)))\cong \varinjlim_k L_{\fb_{N_k}}(\lambda^{(k)})$. Since all possible highest weights of $L_{\fb_{N_k}}(\lambda^{(k)})$ lie in $W_{N_k}\cdot \lambda^{(k)}$, and every weight $\nu^{(k)}\in \Supp  L_{\fb_{N_k}}(\lambda^{(k)})$ is send to the weight $(\nu^{(k)}, \mu^{k+1})\in \Supp  L_{\fb_{N_{k+1}}}(\lambda^{(k+1)})$, the result follows.
\end{proof}

\begin{lem}\label{lem:ann.bound}
Let $c\in\Z_{>0}$, and $M$ be a simple bounded $\g$-module such that $\dim M_\mu< c$, for all $\mu\in \Supp M$. If $N$ is a simple $\g$-module with $\Ann M = \Ann N$, then $N$ is also bounded, and $\dim N_\mu<c$ for all $\mu\in \Supp N$.
\end{lem}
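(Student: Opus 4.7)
The plan is to encode the condition $\dim V_\mu<c$ for every $\mu\in\Supp V$ as a family of polynomial identities whose elements lie in $\Ann V$; since such membership depends only on $\Ann V$, the condition will then transfer automatically from $M$ to $N$. The identities are instances of the standard polynomial $s_{2n}(x_1,\dots,x_{2n})=\sum_{\sigma\in S_{2n}}\mathrm{sgn}(\sigma)\,x_{\sigma(1)}\cdots x_{\sigma(2n)}$; by the Amitsur--Levitzki theorem, $M_n(\C)$ satisfies $s_{2n}\equiv 0$ and is not annihilated by any polynomial identity of smaller degree, and both directions of this result will be used.

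The main technical input is the following: for any simple weight $\g$-module $V$ with finite-dimensional weight spaces and any $\nu\in\Supp V$, the restriction map $\bU(\g)_0\to\End(V_\nu)$ is surjective, where $\bU(\g)_0$ denotes the zero-weight subspace of $\bU(\g)$ under the adjoint $\h$-action. Since $V$ is cyclic over the countable-dimensional algebra $\bU(\g)$, the space $V$ is itself countable-dimensional, and as $\C$ is uncountable the Dixmier form of Schur's lemma gives $\End_{\bU(\g)}(V)=\C$. Jacobson density then produces, for any basis $v_1,\dots,v_n$ of $V_\nu$ and any prescribed targets $w_1,\dots,w_n\in V_\nu$, an element $u\in\bU(\g)$ with $u v_i=w_i$; decomposing $u=\sum_\alpha u_\alpha$ into $\h$-weight components and noting that $u_\alpha v_i\in V_{\nu+\alpha}$, only the weight-zero piece $u_0\in\bU(\g)_0$ contributes to each $w_i\in V_\nu$, so $u_0$ realises the prescribed endomorphism.

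With this surjectivity in hand, apply it first to $M$: since $\dim M_\mu\le c-1$, the algebra $\End(M_\mu)\cong M_{\dim M_\mu}(\C)$ satisfies $s_{2(c-1)}\equiv 0$, so for every $u_1,\dots,u_{2(c-1)}\in\bU(\g)_0$ the element $s_{2(c-1)}(u_1,\dots,u_{2(c-1)})$ acts as zero on every $M_\mu$, hence annihilates $M$ and lies in $\Ann M=\Ann N$. Applying surjectivity to $N$, the image of $\bU(\g)_0$ in $\End(N_\nu)\cong M_{\dim N_\nu}(\C)$ is all of $\End(N_\nu)$; since this image satisfies $s_{2(c-1)}\equiv 0$, the sharpness direction of Amitsur--Levitzki forces $\dim N_\nu\le c-1<c$ for every $\nu\in\Supp N$, which is the desired conclusion. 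The step I expect to require the most care is the Schur-lemma identity $\End_{\bU(\g)}(V)=\C$ that feeds Jacobson density: it is standard but relies essentially on countability of $\dim_\C V$ together with uncountability of $\C$, and everything else in the argument is a direct invocation of Amitsur--Levitzki together with weight bookkeeping.
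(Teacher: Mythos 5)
Your proof is correct and takes essentially the same route as the paper: the paper's proof simply defers to \cite[Theorem~4.3]{PS12}, and the Jacobson-density-plus-standard-identity (Amitsur--Levitzki) argument you spell out, using surjectivity of $\bU(\g)_0\to \End(V_\nu)$ via Dixmier's version of Schur's lemma, is precisely the argument of that theorem adapted to $\fsl(\infty)$. The only point worth flagging is that your final step treats $\End(N_\nu)$ as a finite matrix algebra, i.e.\ it uses that $N$ is a weight module with finite-dimensional weight spaces --- this is the paper's standing assumption on all modules, so it is harmless, but it should be stated.
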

\begin{proof}
By a direct verification one can see that the proof of \cite[Theorem~4.3]{PS12} also works for the case $\g=\fsl(\infty)$.
\end{proof}

Let $A$ be a semi-infinite subset of $\Z_{>0}$ (i.e. $|A|=\infty$ and $|\Z_{>0}\setminus A|=\infty$). We say that $A$ is compatible with a linear order $\prec$ on $\Z_{>0}$ if $a\in A$, $b\in \Z_{>0}\setminus A$ implies $a\prec b$. For every semi-infinite subset $A\subseteq \Z_{>0}$ we define the element $\varepsilon_A := \sum_{i\in A}\varepsilon_i\in \h^*$. Now we have the following result.

\begin{cor}\label{cor:iff.bound}
$V_\p(L_{\fb_\fl}((\lambda), (\mu)))$ is a bounded weight module if and only if one of the following statements hold:
\begin{enumerate}
\item \label{item1:cor:iff.bound} $(\Z_{>0}, \prec)\cong (\Z_{>0},<)$, and there is $k\in \Z_{>0}$ and a partition $\mu = (\mu_1\geq \mu_2\geq\cdots \geq \mu_k)$ such that $\lambda=\sum_{j=1}^k \mu_j\varepsilon_{i_j}$, where $i_1\prec\cdots \prec i_k$ is the left end of $\prec$;
\item \label{item2:cor:iff.bound} $(\Z_{>0}, \prec)\cong (\Z_{<0},<)$, and there is $k\in \Z_{>0}$ and a partition $\mu = (\mu_1\geq \mu_2\geq\cdots \geq \mu_k)$ such that $\lambda=\sum_{j=1}^k -\mu_j\varepsilon_{i_j}$, where $i_k\prec\cdots \prec i_1$ is the right end of $\prec$;
\item \label{item3:cor:iff.bound} $(\Z_{>0}, \prec)\cong (\Z,<)$, and $\lambda=\varepsilon_A$ for some semi-infinite subset $A\subseteq \Z_{>0}$ which is compatible with $\prec$.
\end{enumerate}
\end{cor}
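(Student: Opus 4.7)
My plan is to reduce the boundedness question for $V_\p(L_{\fb_\fl}((\lambda), (\mu)))$ to the corresponding question for the simple highest weight module $L_\fb(\lambda)$, and then invoke the classification of bounded simple highest weight $\fsl(\infty)$-modules due to Grantcharov--Penkov in \cite{GP18}.

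First I would establish the equivalence: $V_\p(L_{\fb_\fl}((\lambda), (\mu)))$ is bounded if and only if $L_\fb(\lambda)$ is bounded. By Theorem~\ref{thm:main1}, these two simple $\g$-modules share the same annihilator. Thus, given a bound $c$ on the dimensions of the weight spaces of one of them, Lemma~\ref{lem:ann.bound} immediately yields the same bound $c$ for the other. Applying this in both directions gives the claimed equivalence, and in fact shows that one can pass to studying only the highest weight module $L_\fb(\lambda)$.

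Next I would invoke the classification of dominant integral weights $\lambda \in P_\fb^+$ for which $L_\fb(\lambda)$ is a bounded weight module, with respect to a Dynkin Borel subalgebra $\fb$ of $\g$, as established in \cite{GP18}. One splits into cases according to Definition~\ref{defin:Borel.subalgebras}: if $\fb$ is right-infinite, the dominance condition forces the coordinates $\lambda_{i_1} \geq \lambda_{i_2} \geq \cdots$ along the chain, and boundedness forces this sequence to stabilize (to $0$ after normalization), so $\lambda$ must be a finite partition supported at the left end, giving (a); the left-infinite case (b) is symmetric under reversal of the order; and in the two-sided case, boundedness forces $\lambda$ to be a ``fundamental'' weight of the form $\varepsilon_A$ for some semi-infinite $A \subseteq \Z_{>0}$ compatible with $\prec$, giving (c).

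The main obstacle will be matching the exact statement of \cite{GP18} to the three cases formulated in the corollary; in particular, verifying that in case (c) no other weights than $\varepsilon_A$ (for $A$ compatible with $\prec$) can arise, and that the compatibility of $A$ with $\prec$ corresponds precisely to $\lambda \in P_\fb^+$. Once this bookkeeping is carried out, the corollary follows by combining the reduction from the first paragraph with the classification.
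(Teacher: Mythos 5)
Your proposal follows exactly the paper's own argument: reduce to $L_\fb(\lambda)$ via the equality of annihilators from Theorem~\ref{thm:main1} together with Lemma~\ref{lem:ann.bound}, then quote the Grantcharov--Penkov classification of bounded simple highest weight modules over a Dynkin Borel subalgebra. The extra bookkeeping you anticipate in matching the three cases is precisely what \cite[Theorem~5.1 and Proposition~5.2]{GP18} supplies, so your proof is correct and essentially identical to the one in the paper.
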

\begin{proof}
By Theorem~\ref{thm:main1}, we have $\Ann V_\p(L_{\fb_\fl}((\lambda), (\mu))) = \Ann L_\fb(\lambda)$. In particular, it follows from Lemma~\ref{lem:ann.bound} that $V_\p(L_{\fb_\fl}((\lambda), (\mu)))$ is bounded if and only if $L_\fb(\lambda)$ is bounded. Since $\fb$ is a Dynkin Borel subalgebra, it follows from \cite[Theorem~5.1 and Proposition~5.2]{GP18} that $L_\fb(\lambda)$ is a bounded weight module if and only if \eqref{item1:cor:iff.bound}, \eqref{item2:cor:iff.bound} or \eqref{item3:cor:iff.bound} hold.
\end{proof}

In \cite{PP18} a parametrization of all primitive ideals of $\bU(\fsl(\infty))$ is given in terms of quadruples $(r,g,X,Y)$, where $r,g\in \Z_{\geq 0}$ and $X,Y$ are Young diagrams. The primitive ideal associated to $(r,g,X,Y)$ is denoted by $I(r,g,X,Y)$. In \cite{PP18} the authors describe how to find the quadruple $(r,g,X,Y)$ associated to $\Ann L_\fb(\lambda)$, and the parameters $r,g,X,Y$ are given in terms of $\lambda$. We set
	\[
\Ann L_\fb(\lambda):=I(r(\lambda),g(\lambda),X(\lambda),Y(\lambda)).
	\]

The following theorem is the main result of this paper. It uses Dimitrov's claim (see the end of  Section~\ref{subsec:classification.int.mod}).

\begin{theo}\label{thm:main2}
If $M$ is an integrable simple weight module with finite-dimensional weight spaces, then one of the following statements hold:
\begin{enumerate}
\item $M\cong V_\p(L_{\fb_\fl}((\lambda), (\mu)))$ where $\p\supseteq \fb$ is a left-infinite Dynkin Borel subalgebra, and $\Ann M=I(r(\lambda),0,\emptyset,Y(\lambda))$.
\item $M\cong V_\p(L_{\fb_\fl}((\lambda), (\mu)))$ where $\p\supseteq \fb$ is a right-infinite Dynkin Borel subalgebra, and $\Ann M=I(r(\lambda^*),0,X(\lambda^*),\emptyset)$, where $\lambda^*:=(\ldots, -\lambda_3, -\lambda_2,-\lambda_1)\in \h^*$.
\item $M\cong V_\p(L_{\fb_\fl}((\lambda), (\mu)))$ where $\p\supseteq \fb$ is a two-sided Dynkin Borel subalgebra, and $\Ann M=I(r(\lambda),g(\lambda),\emptyset,\emptyset)$.
\item $M\cong S_A^\infty(V)$, for some infinite subset $A=\{a_1, a_2,\ldots\mid a_i< a_{i+1}\}\subseteq \Z_{> 0}$, and $\Ann M=I(1,0,\emptyset,\emptyset)$.
\end{enumerate}
\end{theo}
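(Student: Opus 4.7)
The plan is to assemble three ingredients already in place: Dimitrov's claim from the end of Section~\ref{subsec:classification.int.mod}, which pins down the isomorphism type of $M$ as either some $V_\p(L_{\fb_\fl}((\lambda),(\mu)))$ or some $S_A^\infty V$; Theorem~\ref{thm:main1}, which reduces $\Ann V_\p(L_{\fb_\fl}((\lambda),(\mu)))$ to $\Ann L_\fb(\lambda)$; and the algorithm of \cite{PP18} parametrizing $\Ann L_\fb(\lambda)$ via a quadruple $(r(\lambda),g(\lambda),X(\lambda),Y(\lambda))$.

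First I would split into the two cases supplied by Dimitrov's claim. In the $V_\p$-case, Theorem~\ref{thm:main1} reduces the problem to determining which entries of the quadruple can be nonzero for each Dynkin Borel type. In the parametrization of \cite{PP18} the Young diagrams $X(\lambda)$ and $Y(\lambda)$ encode partition data coming from the two ``ends'' of the linearly ordered index set $(\Z_{>0},\prec)$, while $g(\lambda)$ records a two-sided invariant. For a left-infinite $\fb$, the order $\prec$ has a maximum but no minimum, so only one end is available and no two-sided invariant can be defined; the algorithm forces $X(\lambda)=\emptyset$ and $g(\lambda)=0$, yielding $\Ann M=I(r(\lambda),0,\emptyset,Y(\lambda))$. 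The right-infinite case is symmetric, with the orientation convention fixed by \cite{PP18} matched by replacing $\lambda$ with its reversal $\lambda^*=(\ldots,-\lambda_3,-\lambda_2,-\lambda_1)$, producing $I(r(\lambda^*),0,X(\lambda^*),\emptyset)$. For a two-sided $\fb$ neither end supports a finite partition tail, so $X=Y=\emptyset$ and only $r(\lambda)$ and $g(\lambda)$ remain. For the remaining case $M\cong S_A^\infty V$, this module is a simple bounded integrable weight module with weight multiplicities at most one (as a direct limit of fundamental $\fsl(n_k)$-representations); its annihilator coincides with that of the natural module $V$, which is $I(1,0,\emptyset,\emptyset)$. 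This can be argued either via Lemma~\ref{lem:ann.bound} combined with the classification of simple bounded weight modules in \cite{GP18}, or directly from \cite[Theorem~9]{PP16}.

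The main obstacle is the bookkeeping inside the third ingredient: extracting the precise vanishing pattern of the quadruple from \cite{PP18} in each Dynkin Borel type requires tracing how $\prec$ and the entries of $\lambda$ feed into the algorithm, and in particular matching PP18's orientation conventions in the right-infinite case, which is why the reversal $\lambda^*$ appears. By contrast, the $S_A^\infty V$ case reduces to a routine identification once boundedness with multiplicity at most one is observed.
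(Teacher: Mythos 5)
Your proposal follows essentially the same route as the paper: invoke Dimitrov's claim to pin down the isomorphism type of $M$, reduce via Theorem~\ref{thm:main1} to $\Ann L_\fb(\lambda)$, and read off the quadruple $(r,g,X,Y)$ from the description in \cite[\S~6]{PP18} for each Dynkin Borel type. The only ingredient the paper records that you omit is the preliminary observation that conjugate Borel subalgebras satisfy $\Ann L_{\fb_1}(\nu)=\Ann L_{\fb_2}(\nu)$, which justifies passing to the standard representatives used in \cite{PP18}; otherwise the arguments coincide.
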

\begin{proof}
First notice that if $\fb_1$ and $\fb_2$ are conjugate (under $\Aut (\g)$) Borel subalgebras of $\g$, then $\Ann L_{\fb_1}(\nu)=\Ann L_{\fb_2}(\nu)$ for any $\nu\in \h^*$. Now the result follows from Theorem~\ref{thm:main1} along with the description of $\Ann L_\fb(\lambda)$ given in \cite[\S~6]{PP18} for the case where $\fb$ is a Dynkin Borel subalgebra of $\g$.
\end{proof}

Now we can use Corollaries~\ref{cor:iff.hw}~and~\ref{cor:iff.bound} to construct examples of simple integrable weight modules with finite-dimensional weight spaces which are neither highest weight nor bounded. The next example was given in I. Dimitrov's talk \cite{D_talk}.

\begin{example}
Let $\fb$ be the right-infinite Dynkin Borel subalgebra, and consider the element $\lambda = (0,-2,-4,-6,-8,-10,-12,\ldots)\in P_{\fb}^+$. Since $\lambda_i-\lambda_{i+1}=2$ for all $i\in \Z_{>0}$, a way of decomposing $\lambda$ into blocks $\lambda=((\lambda^1), (\lambda^2),\ldots )$ so that each $L_{\fb_{n_k}}(\lambda^k)$ have one-dimensional weight spaces is given by $\lambda=((-0,-2), (-4,-6), (-8,-10),\ldots)$. In particular, we must have $\fl=\bigoplus_{k\in \Z_{>0}}\fsl(2)$ as the reductive component of $\p$, and $L_{\fb\cap \fsl(2)}(\lambda^k)\cong_{\fsl(2)} \fsl(2)$, for all $k> 0$. Notice that
	\[
\Supp L_{\fb\cap \fsl(2)}(\lambda_i, \lambda_{i+1})=\{(\lambda_i, \lambda_{i+1}), (\lambda_i-1, \lambda_{i+1}+1), (\lambda_i-2, \lambda_{i+1}+2)\}
	\]
for each $i\in \Z_{>0}$. Now we can choose $\mu=((-1,-1), (-5,-5), (-9,-9),\ldots )\in \h^*$. Since every weight $\nu^{(k)}\in \Supp  L_{\fb_{N_k}}(\lambda^{(k)})$ is send (via the exhaustion given in Theorem~\ref{thm:main1}) to the weight $(\nu^{(k)}, \mu^{k+1})\in \Supp  L_{\fb_{N_{k+1}}}(\lambda^{(k+1)})\setminus W_{N_{k+1}}\cdot \lambda^{(k+1)}$, we conclude from Corollary~\ref{cor:iff.hw} that $V_\p(L_{\fb_\fl}((\lambda), (\mu)))$ is not a highest weight module with respect to any Borel subalgebra of $\g$. Moreover, it follows from Corollary~\ref{cor:iff.bound} that $L_\fb(\lambda)$ cannot be a bounded module. Thus $V_\p(L_{\fb_\fl}((\lambda), (\mu)))$ is not bounded either.

Notice that if we choose $\eta=((-2,-0), (-6,-4), (-10,-8),\ldots )$, then $V_\p(L_{\fb_\fl}((\lambda), (\eta)))$ is a highest weight module with respect to the opposite Borel of $\fb$. Although it is still not bounded.
\end{example}


\bibliographystyle{alpha}

\bibliography{/Users/lucascalixto/Dropbox/Research/Bib/bibliography}

\end{document}